\documentclass[11pt,a4paper]{article}

\usepackage[T1]{fontenc}
\usepackage[utf8]{inputenc}
\usepackage{lmodern}
\usepackage[margin=25mm]{geometry}
\usepackage{mathtools,amssymb,amsthm}
\usepackage{graphicx}
\usepackage{enumitem}
\usepackage{microtype}
\usepackage[hidelinks]{hyperref}

\numberwithin{equation}{section}
\theoremstyle{plain}
\newtheorem{theorem}{Theorem}[section]
\newtheorem{proposition}{Proposition}[section]
\newtheorem{lemma}{Lemma}[section]
\newtheorem{corollary}{Corollary}[section]
\theoremstyle{definition}
\newtheorem{assumption}{Assumption}[section]
\newtheorem*{assumption*}{Assumption}
\theoremstyle{remark}
\newtheorem{remark}{Remark}[section]

\newcommand{\E}{\mathbb{E}}
\DeclareMathOperator{\Law}{Law}
\newcommand{\R}{\mathbb{R}}
\newcommand{\JJ}{\mathcal{J}}
\newcommand{\EE}{\mathcal{E}}
\DeclarePairedDelimiter{\abs}{\lvert}{\rvert}
\newcommand{\dd}{\mathop{}\!\mathrm{d}}
\newcommand{\coloneq}{\coloneqq}

\title{Optimal Time-Dependent Jump Truncation for Time-Singular L\'evy Processes}
\author{Victoria Knopova\textsuperscript{1,2} \and Denis Platonov\textsuperscript{1}}
\date{}
\hypersetup{
  pdftitle={Optimal Time-Dependent Jump Truncation for Time-Singular L\'evy Processes},
  pdfauthor={Victoria Knopova, Denis Platonov}
}
\begin{document}
\maketitle

\begin{center}
\small
\textsuperscript{1}Department of Probability Theory, Statistics and Actuarial Mathematics,\\
Faculty of Mechanics and Mathematics,\\
Taras Shevchenko National University of Kyiv,\\
Volodymyrska Str.~64, Kyiv 01601, Ukraine\\[1ex]
\textsuperscript{2}Institute of Mathematics, National Academy of Sciences of Ukraine,\\
Tereshchenkivska Str.~3, Kyiv-4 01024, Ukraine\\[1ex]
Victoria Knopova: \href{mailto:vicknopova@knu.ua}{\texttt{vicknopova@knu.ua}},
\href{mailto:vicknopova@imath.kiev.ua}{\texttt{vicknopova@imath.kiev.ua}}\\
Denis Platonov: \href{mailto:dplatonov@knu.ua}{\texttt{dplatonov@knu.ua}}
\end{center}

\begin{abstract}
We study optimal jump truncation for the additive time-singular pure-jump model of the form
$X_T=\int_0^T t^{-\sigma}\,\dd Z_t$, where $Z$ is a L\'evy process and $\sigma\ge 0$.
For a fixed expected jump cost, we minimize the residual small-jump variance over measurable time-dependent cutoffs. Under regularity and tail assumptions on the L\'evy measure, we prove that the problem admits an optimal cutoff, unique up to a.e. equality, of the form $r^\ast(t)=\bigl(c^\ast t^\sigma\bigr)\wedge 1$.
In the symmetric $\alpha$-stable case, we obtain explicit matched-cost comparisons with the classical fixed cutoff and show that within the admissible non-truncated regime the Dynamic Cutting family is strictly better than the classical fixed cutoff whenever $0<\sigma<1/2$. Finally, for symmetric L\'evy measures and cutoffs satisfying the corresponding $L^p$-integrability assumptions, we derive the weak-error bound $W_p(r)\le C_p\,\EE^{p/2}(r)$, $0<p<2$, demonstrating that any cutoff which minimizes the residual variance also minimizes the corresponding upper bound for the weak error.
\end{abstract}

\noindent\textbf{Keywords:} L\'evy process, jump truncation, time singularity, dynamic cutting, weak error.

\noindent\textbf{Mathematics Subject Classification:} 60G51, 60H35, 65C30.

\section*{Introduction}
Investigation of infinite-activity jump models requires introducing a truncation level below which the small jumps are discarded. During the past decades this subject was extensively addressed in the literature.
Convergence results for the Euler--Maruyama discretization scheme, in which small jumps are discarded, go back to Protter and Talay \cite{ProtterTalay1997} and Jacod \cite{Jacod2004}.
Asmussen and Rosi\'nski \cite{AsmussenRosinski2001} established conditions under which the compensated sum of small jumps can be effectively replaced by a Gaussian approximation. Rubenthaler \cite{Rubenthaler2003} studied Euler approximations that discard small jumps. Fournier \cite{Fournier2011} studied Gaussian replacement of the small-jump component and established improved convergence rates in law compared with omission. To further optimize the efficiency, Kohatsu-Higa and Tankov \cite{KohatsuHigaTankov2010} proposed jump-adapted discretization schemes that utilize non-uniform time grids based on the arrival times of large jumps. This methodology has been adapted for simulation of L\'evy processes and L\'evy-driven stochastic differential equations (SDEs). Extending the Asmussen--Rosi\'nski approach to time-inhomogeneous settings, Bossy and Maurer \cite{BossyMaurer2025} proposed the Euler--Maruyama scheme for SDEs driven by a time-inhomogeneous Poisson random measure and derived optimal strong and weak convergence rates.

In our recent papers we investigated time-dependent truncation, which we call \emph{Dynamic Cutting} (DC) (see \cite{IvanenkoKnopovaPlatonov2025,KnopovaPlatonov2026,Platonov2026}). Numerical experiments at matched computational cost show that DC yields consistently smaller $L^1$-strong errors than classical fixed truncation for the tested time-singular jump coefficients; see \cite{KnopovaPlatonov2026}. In \cite{Platonov2026} it is shown that the upper bound on the weak error in the Euler--Maruyama scheme with DC approximation, combined with compensation of the small-jump part by a suitably chosen Gaussian random variable, is of order $O(n^{-1})$. However, the DC method demonstrates some advantages when we study models with singular coefficients; see Section~\ref{sec:motivation}. On the other hand, the question of how to choose the optimal dynamic truncation level that balances residual variance and computational jump cost remains open. In this paper we provide an answer to this question in a simple model situation. We study an additive pure-jump model with time singularity (cf.~\eqref{eq:prototype-canonical} below) and compare truncation rules for jump simulation. We show that a fixed truncation level is suboptimal in the presence of a time singularity of the form $t^{-\sigma}$, $\sigma>0$.

The paper is organized as follows. Section~\ref{sec:motivation} outlines why such models are useful. Section~\ref{sec:setup} introduces the time-singular model and truncation functionals and states the main results. Proofs are given in Sections~\ref{sec:universal-shape}, \ref{sec:stable}, and \ref{sec:weak}.

\section{Motivation: L\'evy-driven Volterra models}\label{sec:motivation}

L\'evy-driven Volterra models with deterministic kernels are used in several applied areas, including energy finance, environmental risk, turbulence, etc. In particular, Barndorff-Nielsen, Benth, and Veraart \cite{BarndorffNielsenBenthVeraart2013} introduce volatility-modulated L\'evy-driven Volterra processes as a framework for energy spot prices, emphasizing their ability to incorporate stochastic volatility, jumps, spikes, and autocorrelation structures. Di Nunno, Fiacco, and Karlsen \cite{DiNunnoFiaccoKarlsen2019} likewise study L\'evy-driven Volterra processes (including Gamma--Volterra) with applications ranging from turbulence to energy finance.

These works motivate the study of truncation and approximation errors in the presence of a local kernel singularity. Below we provide a heuristic explanation of how these models are related to our investigation.

A broad class of jump-driven Volterra models can be written in the form
\begin{equation}\label{eq:general-volterra-motivation}
    Y_t = \int_0^t K(t,s) a(s)\,\dd Z_s,
\end{equation}
where $Z$ is a L\'evy process; without loss of generality assume that $a(s)$ is continuous. Suppose that near the diagonal $s=t$ the kernel has power-type behaviour
\begin{equation}\label{eq:kernel-pow}
    K(t,s) \sim c(t)(t-s)^{-\sigma},
    \qquad \sigma\in(0,1/2),
    \qquad s\uparrow t,
\end{equation}
Then for small $\Delta\in(0,t)$ one can approximate
\[
\int_{t-\Delta}^t K(t,s)a(s)\,\dd Z_s
\approx
c(t)a(t)\int_{t-\Delta}^t (t-s)^{-\sigma}\,\dd Z_s.
\]
Since a L\'evy process has independent stationary increments, by a standard procedure one can show that for fixed $t$
\[
\Law\left(\int_{t-\Delta}^t (t-s)^{-\sigma}\,\dd Z_s\right)
=
\Law\left(\int_0^\Delta s^{-\sigma}\,\dd Z_s\right).
\]

Therefore, the additive pure-jump model studied in this paper can be used in situations where simulation of the trajectory near the singularity requires extra attention. As we will see, the DC procedure ``amplifies'' the small jumps in the region of time singularity, making the method more precise than fixed-level cutting. This connection is especially visible for the Gamma--Volterra kernels studied by Di Nunno, Fiacco, and Karlsen \cite{DiNunnoFiaccoKarlsen2019}, where functionals of the type
\[
Y_t = \int_0^t g(t-s)\,\dd Z_s
\]
were investigated, especially with $g(t)=t^\beta e^{-\lambda t}$, $\beta\in(-1/2,1/2)$, $\lambda>0$. This is exactly our situation if $\beta=-\sigma\in(-1/2,0)$.

From this perspective, our optimization problem is not an isolated exercise. It identifies the truncation rule that is theoretically optimal for the local singular-jump component which appears inside broader Volterra models with power-type kernel singularities. It provides a benchmark for the design of dynamic cutting and approximations, which possibly can be extended to more general singular-kernel jump models.

\section{Setup and results}\label{sec:setup}

Let $Z_t$ be a L\'evy process with Poisson random measure $\mathcal{N}(\dd t,\dd z)$, compensator $\dd t\,\nu(\dd z)$, and compensated measure $\widetilde{\mathcal{N}}(\dd t,\dd z)$. Here $\nu$ is a L\'evy measure on $\R\setminus\{0\}$, i.e.
\begin{equation}\label{eq:levy-integrability}
    \int_{\R} (1\wedge z^2)\,\nu(\dd z)<\infty.
\end{equation}
Unless stated otherwise, we assume that $\nu$ is symmetric.

Let
\begin{equation}\label{eq:prototype-canonical}
    X_T \coloneq
    \int_0^T \int_{|z|\le 1} t^{-\sigma}z\,\widetilde{\mathcal N}(\dd t,\dd z)
    +
    \int_0^T \int_{|z|>1} t^{-\sigma}z\,\mathcal N(\dd t,\dd z), \quad T>0.
\end{equation}

Let $r:[0,T]\to(0,1]$ be a measurable cutoff function. We cut out small jumps in the representation of $X_T$ at the level $r(t)$ (since we intend to cut out only small jumps, we assume that $r(t)\le 1$ for all $t\in[0,T]$):
\begin{equation}\label{eq:retained-theoretical}
    X_T^{(r)} \coloneq
    \int_0^T \int_{r(t)<\abs{z}\le 1} t^{-\sigma} z\,\widetilde{\mathcal N}(\dd t,\dd z)
    +
    \int_0^T \int_{\abs{z}>1} t^{-\sigma} z\,\mathcal N(\dd t,\dd z).
\end{equation}
Denote by $R_T^{(r)}$ the omitted small-jump component
\begin{equation}\label{eq:residual-def}
    R_T^{(r)} \coloneq \int_0^T \int_{\abs{z}\le r(t)} t^{-\sigma} z\,\widetilde{\mathcal N}(\dd t,\dd z).
\end{equation}
Consequently,
\begin{equation}\label{eq:decomp}
    X_T = X_T^{(r)} + R_T^{(r)}.
\end{equation}

Define the following auxiliary functions:
\begin{equation}\label{eq:V-def}
   N(\rho) \coloneq \nu(\{|z|>\rho\}), \qquad V(\rho) \coloneq \int_{\abs{z}\le \rho} z^2\,\nu(\dd z),
    \qquad \rho>0.
\end{equation}
For a measurable function $\phi:[0,T]\to[0,1]$, define
\begin{equation}\label{eq:cost-error-functionals}
    \JJ(\phi) \coloneq \int_0^T N(\phi(t))\,\dd t,
    \quad
    \EE(\phi) \coloneq \int_0^T t^{-2\sigma} V(\phi(t))\,\dd t.
\end{equation}
We identify cutoffs up to equality almost everywhere.
Note that $N(r)$ is the intensity of the time-inhomogeneous Poisson process
\[
Y_T\coloneq \int_0^T \int_{|z|>r(t)} \mathcal{N}(\dd t,\dd z).
\]
Thus,
\begin{equation}\label{NY}
\JJ(r) = \E Y_T
\end{equation}
is the expected number of jumps of $Y$ on $[0,T]$. Whenever $\EE(r)<\infty$, the process $R^{(r)}$ is a square-integrable martingale, and the It\^o isometry (see, e.g., \cite{BottcherSchillingWang2013}) gives
\begin{equation}\label{eq:ito-isometry-error}
    \E|R_T^{(r)}|^2 = \E\langle R^{(r)}\rangle_T = \int_0^T \int_{\abs{z}\le r(t)} t^{-2\sigma} z^2\,\nu(\dd z)\,\dd t = \EE(r),
\end{equation}
which shows that $\EE(r)$ is the residual small-jump variance. Here $\langle R^{(r)}\rangle_T$ is the predictable quadratic variation of the martingale $R^{(r)}$ evaluated at time $T$. Below we provide a sufficient condition under which $\EE(r)$ is finite.

Our first result concerns the following deterministic optimization problem for a fixed simulation cost $J_0>0$:
\begin{equation}\label{eq:fixed-cost-problem}
    \min_{r}\; \EE(r)
    \qquad\text{subject to}\qquad
    \JJ(r)=J_0
\end{equation}
Since $r(t)\le 1$, by monotonicity of $N(\cdot)$ we have
\[
\JJ(r)\ge \int_0^T N(1)\,\dd t = TN(1),
\]
implying that $J_0\ge TN(1)$ is a necessary condition for the constrained problem \eqref{eq:fixed-cost-problem}.

\begin{assumption}\label{ass:global-opt}
     \leavevmode
     \begin{enumerate}[label=(A\arabic*),itemsep=2pt,topsep=4pt]
         \item $N\in C^1((0,1])$ and $N'(\rho)<0$ for $\rho\in(0,1]$.
         \item $N(0+)=\infty$.
         \item There exist constants $\alpha\in(0,2)$ and $C_\alpha>0$ such that
         \begin{equation}\label{eq:N-power-tail}
             N(\rho)\le C_\alpha \rho^{-\alpha},
             \qquad \rho\in(0,1].
         \end{equation}
     \end{enumerate}
\end{assumption}
Note that \eqref{eq:N-power-tail} always holds true with $\alpha=2$.

\begin{theorem}[Existence, uniqueness and global optimality]\label{thm:global-opt}
    Under Assumption~\ref{ass:global-opt}, the following hold.

    \begin{enumerate}[label=(\roman*),itemsep=2pt,topsep=4pt]
        \item If $\sigma\in[0,1/2)$ and $J_0=TN(1)$, then the constrained optimization problem \eqref{eq:fixed-cost-problem}
        has a unique minimizer
        \[
        r^\ast(t)\equiv 1
        \qquad
        t\in[0,T].
        \]

        \item If $\sigma\in[0,1/\alpha)$ and $J_0>TN(1)$, then the constrained optimization problem \eqref{eq:fixed-cost-problem} over measurable cutoffs $r:[0,T]\to(0,1]$ has a unique minimizer
        \begin{equation}\label{eq:global-rstar-final}
            r^\ast(t)=\bigl(c^\ast t^\sigma\bigr)\wedge 1,
            \qquad
            t\in(0,T],
        \end{equation}
        where $c^\ast$ is a unique constant such that
        \begin{equation}\label{eq:global-cost-final}
            \int_0^T N\bigl((c^\ast t^\sigma)\wedge 1\bigr)\,\dd t=J_0,
        \end{equation}
        If in addition $c^\ast T^\sigma\le 1$, then
        \[
        r^\ast(t)=c^\ast t^\sigma
        \qquad\text{for all }t\in(0,T].
        \]
    \end{enumerate}
\end{theorem}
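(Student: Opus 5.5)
Part (i) is immediate. Since $N$ is strictly decreasing by (A1), any admissible $r$ with $r\le 1$ satisfies $N(r(t))\ge N(1)$. The constraint $\JJ(r)=TN(1)$ then forces $N(r(t))=N(1)$ a.e., hence $r=1$ a.e. It remains to check that $\EE(1)=V(1)\int_0^T t^{-2\sigma}\dd t<\infty$, which holds because $\sigma<1/2$ and $V(1)<\infty$ by \eqref{eq:levy-integrability}.

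For (ii) I will use a Lagrangian (pointwise) argument built on the identity $\dd V = -\rho^2\,\dd N$. For a symmetric $\nu$, $V(\rho)=\int_{(0,\rho]} u^2\,(-\dd N(u))$, and (A1) gives $V'(\rho)=-\rho^2 N'(\rho)$. Fix $\lambda>0$ and consider the pointwise function
\[
g_t(\rho)\coloneq t^{-2\sigma}V(\rho)+\lambda N(\rho),\qquad \rho\in(0,1].
\]
Its derivative is $g_t'(\rho)=-N'(\rho)\bigl(t^{-2\sigma}\rho^2-\lambda\bigr)$. Since $-N'>0$, $g_t$ is strictly decreasing on $(0,\sqrt{\lambda}\,t^{\sigma}]$ and strictly increasing after it. So $g_t$ has the unique minimizer $\rho_t=(\sqrt\lambda\, t^\sigma)\wedge 1$ on $(0,1]$. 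I take $c=\sqrt\lambda$.

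Next I fix $c^\ast$ by the cost equation. Define $F(c)\coloneq\int_0^T N((ct^\sigma)\wedge1)\,\dd t$. To see that $F$ is finite, split off the region where $ct^\sigma<1$ and use (A3) there: $N(ct^\sigma)\le C_\alpha c^{-\alpha}t^{-\sigma\alpha}$, which is integrable exactly because $\sigma\alpha<1$. This is where the hypothesis $\sigma<1/\alpha$ enters. Monotone and dominated convergence then show that $F$ is continuous and strictly decreasing on the set where $cT^\sigma$ is not much larger than $1$. Moreover, $F(c)\to\infty$ as $c\to0$ by (A2) and Fatou, and $F(c)=TN(1)$ once $cT^\sigma\ge1$. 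Hence there is a unique $c^\ast$ with $F(c^\ast)=J_0$ for any $J_0>TN(1)$, and necessarily $c^\ast T^\sigma<1$. I also need $\EE(r^\ast)<\infty$. Integrating by parts, $V(\rho)\le 2\int_0^\rho uN(u)\,\dd u\le C\rho^{2-\alpha}$ by (A3). Then $t^{-2\sigma}V(c^\ast t^\sigma)\lesssim t^{-\sigma\alpha}$, which is integrable.

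Optimality and uniqueness follow from the pointwise inequality. Let $r$ be any admissible cutoff with $\JJ(r)=J_0$ and $\EE(r)<\infty$ (if $\EE(r)=\infty$ there is nothing to prove). For every $t$, $g_t(r(t))\ge g_t(r^\ast(t))$. Integrating this, both sides are finite, and the $\lambda\JJ$ terms cancel because the two costs are equal. This gives $\EE(r)\ge\EE(r^\ast)$. If equality holds, then $g_t(r(t))=g_t(r^\ast(t))$ a.e., and strict unimodality of $g_t$ forces $r=r^\ast$ a.e. The final claim is simply the case where the truncation at $1$ is inactive; with $c^\ast T^\sigma<1$, as shown above, $r^\ast=c^\ast t^\sigma$ on all of $(0,T]$.

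The main obstacle is the careful analysis of $F$ on the regime $J_0>TN(1)$: its finiteness, continuity and strict monotonicity, together with the integrability needed so that no infinite quantities are subtracted when integrating the pointwise inequality. A second delicate point is justifying $V'=-\rho^2N'$ from (A1), including possible atoms of $\nu$. I will handle this with the Stieltjes identity $V(\rho)=-\int_{(0,\rho]}u^2\,\dd N(u)$, which is valid since $N$ is $C^1$ and hence $\nu$ has no atoms in $(0,1]$.
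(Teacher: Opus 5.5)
Your pointwise Lagrangian argument is sound and is a legitimate alternative to the paper's route. The paper substitutes $u=N(r)$, proves $\Phi=V\circ N^{-1}$ strictly convex with $\Phi'(u)=-r^2$, applies the supporting-line inequality at $u^\ast$ together with the zero term $(c^\ast)^2\int_0^T(u-u^\ast)\,\dd t$, and gets uniqueness from strict convexity of $I$. Integrating your $g_t(r(t))\ge g_t(r^\ast(t))$ with $\lambda=(c^\ast)^2$ is the same multiplier argument. It is carried out in the original variable using only unimodality of $g_t$, and it yields uniqueness directly from the pointwise strict inequality.

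However, your analysis of $F$ contains a false step. For $\sigma>0$ it is not true that $F(c)=TN(1)$ once $cT^\sigma\ge 1$. For every $c>0$ one has $(ct^\sigma)\wedge 1=ct^\sigma<1$ on $(0,c^{-1/\sigma}\wedge T)$, where $N(ct^\sigma)>N(1)$, so $F(c)>TN(1)$ for all $c$. In the stable case, for instance, $F(c)=TN(1)+2M\frac{\alpha\sigma}{1-\alpha\sigma}c^{-1/\sigma}$ whenever $cT^\sigma\ge 1$. In particular $F(T^{-\sigma})=\int_0^T N((t/T)^\sigma)\,\dd t>TN(1)$, so your intermediate-value argument on $c\le T^{-\sigma}$ produces $c^\ast$ only for $J_0\ge F(T^{-\sigma})$. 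For $TN(1)<J_0<F(T^{-\sigma})$ the solution satisfies $c^\ast T^\sigma>1$. Your conclusion ``necessarily $c^\ast T^\sigma<1$'' is therefore wrong, and this truncated regime is exactly why the theorem states its last sentence conditionally.

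To repair this, show as in Lemma~\ref{lem:cost-map} the following:
\begin{itemize}
\item For $\sigma>0$, $F$ is continuous and strictly decreasing on all of $(0,\infty)$. Strictness holds because for $c_1<c_2$ one has $r_{c_1}<r_{c_2}$ on $(0,c_1^{-1/\sigma}\wedge T)$.
\item $F(c)\downarrow TN(1)$ as $c\to\infty$, by dominated convergence: $r_c(t)\uparrow 1$ for each $t>0$, and $N(r_c)\le N(r_{c_0})$ for $c\ge c_0$.
\item Your bound for $\EE(r^\ast)$ must also cover the set $\{c^\ast t^\sigma\ge 1\}$, where $t^{-2\sigma}V(1)\le (c^\ast)^2V(1)$.
\end{itemize}
With these changes the rest of your proof goes through unchanged. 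It already accounts for the cap at $1$ through $\rho_t=(\sqrt{\lambda}\,t^\sigma)\wedge 1$.
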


We now consider the $\alpha$-stable case, which makes the general optimizer from Theorem~\ref{thm:global-opt} explicit and provides a setting for comparison with the fixed cutoff.
In this case the L\'evy measure is
\begin{equation}\label{eq:stable-levy}
    \nu(\dd z)=\alpha M\abs{z}^{-1-\alpha}\,\dd z,
    \qquad \alpha\in(0,2),\; M>0.
\end{equation}
Then $N(\rho)=2M\rho^{-\alpha}$.
For $\varepsilon\in[0,1)$ and $c>0$, consider the power-law cutoff family
\begin{equation}\label{eq:stable-family}
    r_{c,\varepsilon}(t)\coloneq c\,t^{\varepsilon/\alpha},
    \qquad 0<t\le T.
\end{equation}
In this model, the inverse-tail parametrization used by Dynamic Cutting is explicit. Indeed,
\[
\tau(t)\coloneq \sup\{\rho\ge 0: N(\rho)\ge 2/t\}
\implies
\tau(Ct^\varepsilon)=c\,t^{\varepsilon/\alpha}
\]
for a suitable constant $c>0$. Thus, in the $\alpha$-stable case, the DC family coincides with the power-law family \eqref{eq:stable-family}. Moreover, Theorem~\ref{thm:global-opt} suggests the exponent $\varepsilon=\alpha\sigma$, since then $r_{c,\varepsilon}(t)\propto t^\sigma$ before truncation at $1$. Theorem~\ref{thm:matched-cost} shows that this exponent is optimal within the admissible non-truncated class. Figure~\ref{fig:tau-viz} shows the family \eqref{eq:stable-family} for several values of $\varepsilon$.

\begin{figure}[htbp]
    \centering
    \includegraphics[width=0.7\textwidth]{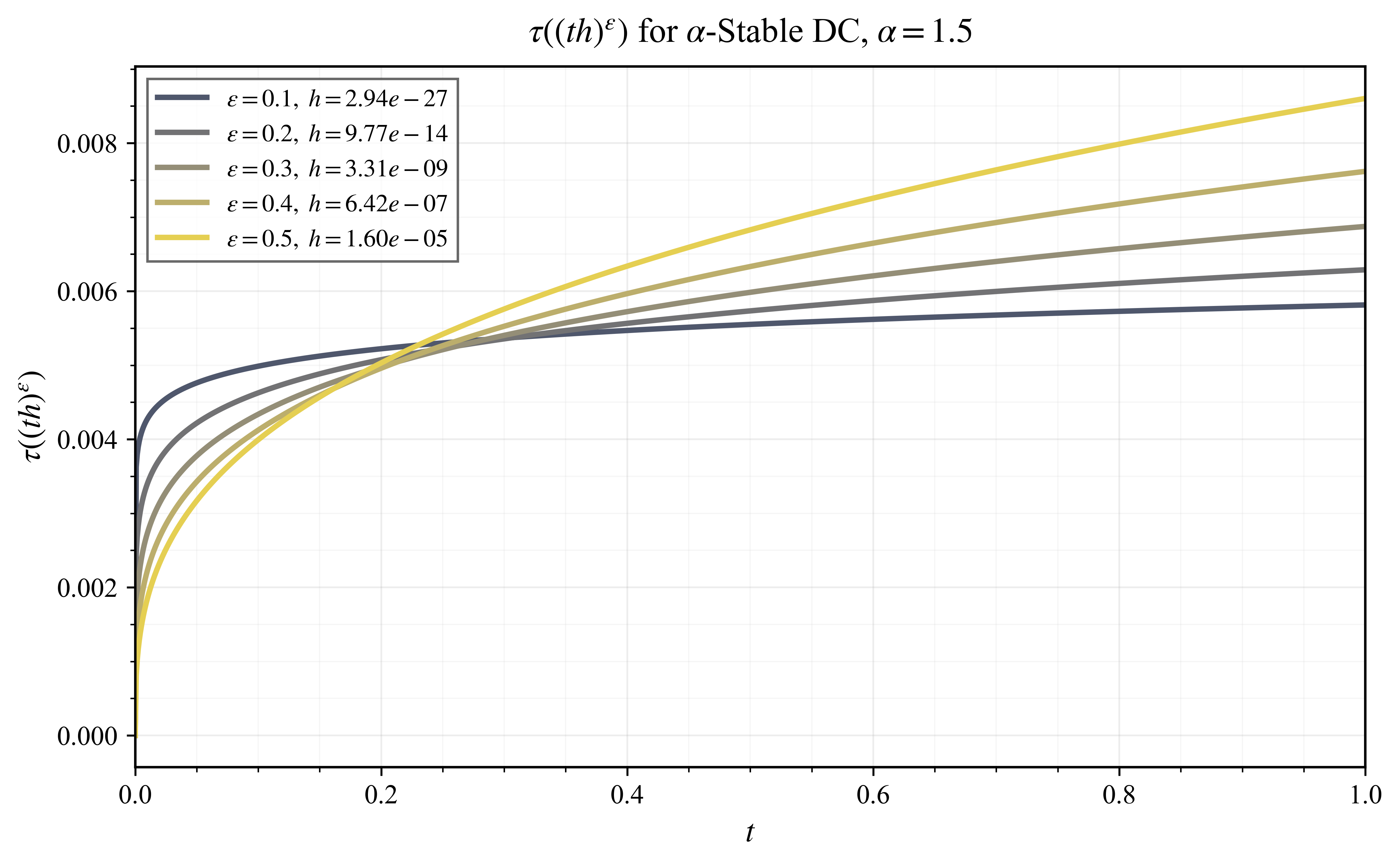}
    \caption{Visualization of the DC cutoff $\tau(t)$ for different $\varepsilon$.}
    \label{fig:tau-viz}
\end{figure}

For the $\alpha$-stable case we can calculate the cost explicitly
\[
\JJ(r_{c,\varepsilon})
=
\frac{2M}{1-\varepsilon}\,c^{-\alpha}T^{1-\varepsilon},
\qquad 0\le \varepsilon<1.
\]
Thus, for a fixed cost $J>0$, we choose
\[
c_\varepsilon(J)
=
\left(\frac{2M T^{1-\varepsilon}}{(1-\varepsilon)J}\right)^{1/\alpha}.
\]
This cutoff is non-truncated when
\[
c_\varepsilon(J)T^{\varepsilon/\alpha}\le 1
\iff
J\ge \frac{2MT}{1-\varepsilon}.
\]
Hence, for fixed cost $J$, the admissible non-truncated range is
\begin{equation}\label{eps1}
0\le \varepsilon \le 1-\frac{2MT}{J}.
\end{equation}

Let us also introduce two helper notations:
\[
\theta\coloneq \frac{2-\alpha}{\alpha}>0,
\qquad
K\coloneq \frac{2\alpha M}{2-\alpha}.
\]

\begin{theorem}\label{thm:matched-cost}
    Fix a cost $J>0$, and let $\varepsilon\in[0,1)$ be as in \eqref{eps1}.
    \begin{enumerate}[label=(\roman*),itemsep=2pt,topsep=4pt,beginpenalty=10000]
        \item If $1-2\sigma+\varepsilon\theta>0$, then the corresponding value of the exact mean-square error from \eqref{eq:ito-isometry-error} is
        \begin{equation}\label{eq:E-epsilon-cost}
            \EE_{\varepsilon}(J)
            = K\left(\frac{2M T^{1-\varepsilon}}{(1-\varepsilon)J}\right)^{\theta}
            \frac{T^{1-2\sigma+\varepsilon\theta}}{1-2\sigma+\varepsilon\theta}.
        \end{equation}

        \item If $\sigma<1/2$, then relative to the constant-cutoff case $\varepsilon=0$, the exact matched-cost variance ratio is
        \begin{equation}\label{eq:R-epsilon}
            R(\varepsilon) \coloneq \frac{\EE_{\varepsilon}(J)}{\EE_{0}(J)}
            = \frac{1-2\sigma}{1-2\sigma+\varepsilon\theta}(1-\varepsilon)^{-\theta}.
        \end{equation}

        \item If $\sigma<1/2$ and $J \ge 2MT / (1-\alpha\sigma)$, then $R$ is uniquely minimized over all admissible $\varepsilon\in\left[0,1-\frac{2MT}{J}\right]$ at $\varepsilon^\ast=\alpha\sigma\in[0,1)$.
    \end{enumerate}
\end{theorem}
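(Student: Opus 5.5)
The proof is a direct computation for (i) and (ii), followed by one-variable calculus for (iii).

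For (i), compute $V$ in the stable case:
\[
V(\rho)=\int_{|z|\le\rho}z^2\,\alpha M|z|^{-1-\alpha}\,\dd z=\frac{2\alpha M}{2-\alpha}\rho^{2-\alpha}=K\rho^{2-\alpha}.
\]
Let $\varepsilon$ lie in the admissible range \eqref{eps1}. Then $r=c_\varepsilon(J)\,t^{\varepsilon/\alpha}\le 1$ on $(0,T]$, so $r$ is a legitimate cutoff. Its cost is exactly $J$ by the explicit formula for $\JJ(r_{c,\varepsilon})$. Substituting into \eqref{eq:cost-error-functionals} gives
\[
\EE(r)=K c^{2-\alpha}\int_0^T t^{-2\sigma+\varepsilon(2-\alpha)/\alpha}\,\dd t=K c^{\alpha\theta}\int_0^T t^{-2\sigma+\varepsilon\theta}\,\dd t.
\]
The integral is finite precisely when $1-2\sigma+\varepsilon\theta>0$, and it then equals $T^{1-2\sigma+\varepsilon\theta}/(1-2\sigma+\varepsilon\theta)$. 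Finally $c^{\alpha\theta}=\bigl(2MT^{1-\varepsilon}/((1-\varepsilon)J)\bigr)^{\theta}$, which is \eqref{eq:E-epsilon-cost}. By \eqref{eq:ito-isometry-error} this quantity is the exact mean-square error.

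For (ii), the assumption $\sigma<1/2$ makes the integrability condition of (i) hold for every $\varepsilon\ge0$, including $\varepsilon=0$. Dividing \eqref{eq:E-epsilon-cost} by its value at $\varepsilon=0$, the factors $K$, $(2M/J)^\theta$ and the powers of $T$ cancel, since
\[
T^{(1-\varepsilon)\theta}\,T^{1-2\sigma+\varepsilon\theta}=T^{\theta+1-2\sigma}
\]
does not depend on $\varepsilon$. What remains is \eqref{eq:R-epsilon}.

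For (iii), set
\[
g(\varepsilon)=\log R(\varepsilon)=\text{const}-\log(1-2\sigma+\varepsilon\theta)-\theta\log(1-\varepsilon).
\]
Its derivative is
\[
g'(\varepsilon)=-\frac{\theta}{1-2\sigma+\varepsilon\theta}+\frac{\theta}{1-\varepsilon},
\]
which has the sign of $(1-2\sigma+\varepsilon\theta)-(1-\varepsilon)=\varepsilon(1+\theta)-2\sigma$. Since $1+\theta=2/\alpha$, this is $\tfrac{2}{\alpha}(\varepsilon-\alpha\sigma)$. Hence $g$ is strictly decreasing on $[0,\alpha\sigma]$ and strictly increasing on $[\alpha\sigma,1)$, so $\varepsilon^\ast=\alpha\sigma$ is the unique minimizer on $[0,1)$. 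Because $\sigma<1/2$ and $\alpha<2$, we have $\alpha\sigma<1$.

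It remains to check that $\varepsilon^\ast$ lies in the admissible interval $[0,1-2MT/J]$. The hypothesis $J\ge 2MT/(1-\alpha\sigma)$ is equivalent to $\alpha\sigma\le 1-2MT/J$, so it does. A unique minimizer on the larger set $[0,1)$ that lies in the smaller interval is the unique minimizer there as well.

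The only step needing care is this admissibility bookkeeping, namely matching the hypothesis on $J$ to the non-truncation condition and keeping $1-2\sigma+\varepsilon\theta>0$ throughout. The rest is routine calculus.
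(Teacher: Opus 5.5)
Your proposal is correct and follows essentially the same route as the paper: the explicit computation $V(\rho)=K\rho^{2-\alpha}$ and the substitution of $c_\varepsilon(J)$ give (i)--(ii), and one-variable calculus on $\log R$, together with the hypothesis on $J$ placing $\alpha\sigma$ in the admissible interval, gives (iii). The only difference is cosmetic: you get uniqueness from the sign of $g'(\varepsilon)$, which is a positive multiple of $\varepsilon-\alpha\sigma$, whereas the paper appeals to strict convexity of $\log R$; both are valid, and yours is slightly more explicit.
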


An alternative approach to fair comparison between two different cutoff methods is to match the omitted small-jump variance. The following proposition demonstrates the dual nature of the problem.

Fix a target mean-square error level $v>0$. Over the same admissible non-truncated class as in Theorem~\ref{thm:matched-cost}, for each admissible exponent $\varepsilon\in[0,1)$ satisfying $\EE(r_{c,\varepsilon})=v$, denote by $J_\varepsilon(v)$ the corresponding jump cost.

\begin{proposition}\label{cor:matched-error}
    Suppose that $\sigma<1/2$. Then
    \begin{equation}\label{eq:Reff}
        R_{\mathrm{err}}(\varepsilon;v)
        \coloneq
        \frac{J_\varepsilon(v)}{J_0(v)}
        = \bigl(R(\varepsilon)\bigr)^{1/\theta}.
    \end{equation}
    If, in addition, for $\varepsilon^\ast=\alpha\sigma$ we have
    $J_{\varepsilon^\ast}(v)\ge \frac{2MT}{1-\varepsilon^\ast}$,
    then $R_{\mathrm{err}}$ is minimized over the same admissible non-truncated class at the same exponent
    $\varepsilon^\ast=\alpha\sigma$.
    In that regime, $R_{\mathrm{err}}(\varepsilon^\ast)<1$ whenever $\sigma>0$.
\end{proposition}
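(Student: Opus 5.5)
The plan is to invert the explicit formula \eqref{eq:E-epsilon-cost} from Theorem~\ref{thm:matched-cost}(i). For fixed $\varepsilon$ in the non-truncated class, and with $\sigma<1/2$, we have $1-2\sigma+\varepsilon\theta>0$. So \eqref{eq:E-epsilon-cost} applies and can be written as
\[
\EE_\varepsilon(J)=A_\varepsilon\,J^{-\theta},\qquad
A_\varepsilon\coloneq K\Bigl(\frac{2MT^{1-\varepsilon}}{1-\varepsilon}\Bigr)^{\theta}\frac{T^{1-2\sigma+\varepsilon\theta}}{1-2\sigma+\varepsilon\theta}.
\]
The map $J\mapsto A_\varepsilon J^{-\theta}$ is a strictly decreasing bijection of $(0,\infty)$, since $\theta>0$. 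Hence the equation $\EE(r_{c_\varepsilon(J),\varepsilon})=v$ has the unique solution
\[
J_\varepsilon(v)=(A_\varepsilon/v)^{1/\theta}.
\]
Equivalently, the map $c\mapsto\EE(r_{c,\varepsilon})$ is strictly increasing, so $c$, and therefore $J$, is determined by $v$. Taking the ratio gives
\[
J_\varepsilon(v)/J_0(v)=(A_\varepsilon/A_0)^{1/\theta}.
\]
Also, $A_\varepsilon/A_0=\EE_\varepsilon(J)/\EE_0(J)=R(\varepsilon)$ for any $J$, by \eqref{eq:R-epsilon}. This yields \eqref{eq:Reff}, independently of $v$. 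One point needs care: the formula in Theorem~\ref{thm:matched-cost}(i) holds only when the cutoff stays below $1$, that is $J\ge 2MT/(1-\varepsilon)$. I will state that $J_\varepsilon(v)$ is always understood within this admissible class, which is exactly how the proposition defines it. For the ratio identity we only need the closed form of $\EE(r_{c,\varepsilon})$ as a function of $c$, and this holds regardless of truncation once we use the family $r_{c,\varepsilon}$ itself.

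For minimization, $x\mapsto x^{1/\theta}$ is strictly increasing on $(0,\infty)$. So $R_{\mathrm{err}}(\cdot;v)$ and $R$ have the same minimizers over any set of exponents. The admissible class for fixed $v$ is $\{\varepsilon: J_\varepsilon(v)\ge 2MT/(1-\varepsilon)\}$. This differs from the fixed-$J$ interval of Theorem~\ref{thm:matched-cost}(iii), so I cannot quote (iii) verbatim. Instead I will reprove the needed fact directly from \eqref{eq:R-epsilon}: $R$ is uniquely minimized over all of $[0,1)$ at $\varepsilon^\ast=\alpha\sigma$. To do this, differentiate
\[
\log R(\varepsilon)=\log(1-2\sigma)-\log(1-2\sigma+\varepsilon\theta)-\theta\log(1-\varepsilon).
\]
The derivative is
\[
\theta\Bigl[\frac{1}{1-\varepsilon}-\frac{1}{1-2\sigma+\varepsilon\theta}\Bigr],
\]
whose sign is that of $\varepsilon(1+\theta)-2\sigma$. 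Since $1+\theta=2/\alpha$, this is the sign of $\varepsilon-\alpha\sigma$. So $R$ is strictly decreasing on $[0,\alpha\sigma]$ and strictly increasing on $[\alpha\sigma,1)$. Also $\alpha\sigma<\alpha/2<1$, so $\varepsilon^\ast$ lies in $[0,1)$. A global unique minimizer on $[0,1)$ is a fortiori the unique minimizer on any subset containing it. The extra hypothesis $J_{\varepsilon^\ast}(v)\ge 2MT/(1-\varepsilon^\ast)$ is exactly what ensures $\varepsilon^\ast$ belongs to the admissible class.

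Finally, if $\sigma>0$ then $\varepsilon^\ast>0$. Strict monotonicity on $[0,\varepsilon^\ast]$ then gives $R(\varepsilon^\ast)<R(0)=1$, and hence $R_{\mathrm{err}}(\varepsilon^\ast)=R(\varepsilon^\ast)^{1/\theta}<1$. The main obstacle is the bookkeeping about the admissible class. Two things must be checked: that the fixed-$v$ class is the right one, and that the comparison point $\varepsilon=0$ is admissible or at least well defined. Both are resolved by noting that the identity $A_\varepsilon/A_0=R(\varepsilon)$ depends only on the closed forms, and the sign computation gives global minimality on $[0,1)$.
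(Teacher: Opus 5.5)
Your argument is correct and is essentially the paper's: both eliminate $c$ between the closed forms of $\JJ(r_{c,\varepsilon})$ and $\EE(r_{c,\varepsilon})$ (you through $\EE_\varepsilon(J)=A_\varepsilon J^{-\theta}$, the paper by solving for $c^{-\alpha}$), and both then use that $x\mapsto x^{1/\theta}$ is strictly increasing. Your sign analysis shows that $\alpha\sigma$ is the unique minimizer of $R$ on all of $[0,1)$, where the paper simply cites Theorem~\ref{thm:matched-cost}(iii); this makes explicit that the fixed-$v$ admissible class need not match the fixed-$J$ interval, and in fact that class is an initial segment of $[0,1)$, so it automatically contains $\varepsilon=0$.
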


Finally, we show that given a cutoff level $r(t)$, the mean-square error $\EE(r)$ controls the distance between $p$-moments of $X_T$ and $X_T^{(r)}$.
Let
\begin{equation}\label{eq:weak-error-def}
    W_p(r) \coloneq \abs*{\E\abs{X_T}^p - \E\abs{X_T^{(r)}}^p}.
\end{equation}
For Theorem~\ref{thm:weak-bound} we consider cutoffs $r$ such that $\EE(r)<\infty$.

\begin{theorem}\label{thm:weak-bound}
    Let $0<p<2$, and suppose that $X_T,X_T^{(r)}\in L^p$. Then there exists a constant $C_p\in(0,\infty)$ depending only on $p$ such that
    \begin{equation}\label{eq:weak-bound}
        W_p(r) \le C_p\,\EE^{p/2}(r).
    \end{equation}
    In particular, for fixed computational cost, any cutoff that minimizes $\EE(r)$ also minimizes the derived upper bound on $W_p(r)$.
\end{theorem}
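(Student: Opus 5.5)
The plan is to compare $p$-th moments through the elementary inequality for $0<p<2$. For $0<p\le 1$ we have $\bigl||a|^p-|b|^p\bigr|\le |a-b|^p$. For $1<p<2$ we use
\[
\bigl||a|^p-|b|^p\bigr|\le p\,|a-b|\,\bigl(|a|^{p-1}+|b|^{p-1}\bigr),
\]
but this would bring in moments of $X_T$ with a constant depending on more than $p$. So we need a route avoiding that.

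The key structural fact is the decomposition \eqref{eq:decomp}: $X_T=X_T^{(r)}+R_T^{(r)}$, where $R_T^{(r)}$ is independent of $X_T^{(r)}$. The two stochastic integrals run over disjoint sets $\{|z|\le r(t)\}$ and $\{|z|>r(t)\}$ of the Poisson random measure, hence they are independent. Moreover $R_T^{(r)}$ is symmetric, since $\nu$ is symmetric, and centered. Also $\E|R_T^{(r)}|^2=\EE(r)$ by \eqref{eq:ito-isometry-error}.

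Set $Y=X_T^{(r)}$ and $R=R_T^{(r)}$, and condition on $Y=y$. Then
\[
\E|X_T|^p-\E|Y|^p=\E\bigl[g(Y)\bigr],\qquad g(y)\coloneq\E|y+R|^p-|y|^p .
\]
By symmetry of $R$,
\[
g(y)=\tfrac12\,\E\bigl[|y+R|^p+|y-R|^p-2|y|^p\bigr].
\]
So it suffices to show the deterministic second-difference inequality
\[
0\le |y+h|^p+|y-h|^p-2|y|^p\le C_p|h|^p \quad\text{for all real } y,h,\qquad 0<p<2 .
\]
We do not even need the lower bound; the upper bound in absolute value suffices. By homogeneity, reduce to $h=1$ and $y=s\ge0$ and study $\phi(s)=(s+1)^p+|s-1|^p-2s^p$.

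On compact ranges of $s$, $\phi$ is continuous, hence bounded. For large $s$, a Taylor expansion gives $\phi(s)=p(p-1)s^{p-2}+O(s^{p-4})\to0$. Hence $\sup_s|\phi(s)|=:2C_p<\infty$, and $C_p$ depends only on $p$. This yields $|g(y)|\le C_p\,\E|R|^p$ uniformly in $y$.

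Finally, by Jensen (Lyapunov), since $p<2$, $\E|R|^p\le(\E R^2)^{p/2}=\EE^{p/2}(r)$. Integrating over the law of $Y$ gives
\[
W_p(r)\le \E|g(Y)|\le C_p\,\EE^{p/2}(r).
\]
The hypothesis $X_T,X_T^{(r)}\in L^p$ justifies writing the difference of expectations as the expectation of $g(Y)$ via Fubini. The ``in particular'' clause is immediate, since $x\mapsto C_p x^{p/2}$ is strictly increasing.

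The main care points are the independence of $Y$ and $R$ (standard for disjoint supports of a PRM) and the uniform bound on the second difference for $1<p<2$. The symmetrization is what kills the first-order term $p|y|^{p-2}y\,h$, which otherwise would force dependence on moments of $X_T$.
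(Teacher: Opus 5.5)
Your proof is correct, and it takes a different route from the paper. The paper works on the Fourier side. It uses the identity $\E|Y|^p=c_p\int_{\R}(1-\operatorname{Re}\varphi_Y(s))\,|s|^{-1-p}\,\dd s$ (Lemma~\ref{lem:phi}, with $c_p$ computed from a Gradshteyn--Ryzhik integral). It then factorizes $\varphi_{X_T}=\varphi_{X_T^{(r)}}\varphi_{R_T^{(r)}}$ by independence, bounds $|1-\varphi_{R_T^{(r)}}(s)|\le\min(2,s^2\EE(r)/2)$, and integrates explicitly, obtaining the closed-form constant $C_p=c_p2^{2-p}\bigl(\frac1{2-p}+\frac1p\bigr)$.

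You stay in physical space instead: you condition on $X_T^{(r)}$, symmetrize in $R_T^{(r)}$ to kill the first-order term, apply a uniform second-difference bound for $|\cdot|^p$, and finish with Lyapunov. Both arguments rest on the same two facts: independence of $X_T^{(r)}$ and $R_T^{(r)}$, and symmetry of $R_T^{(r)}$.

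What your route buys:
\begin{itemize}
\item It avoids the fractional-moment Fourier lemma and the special-function evaluation.
\item It passes through the sharper intermediate estimate $W_p(r)\le C_p\,\E|R_T^{(r)}|^p$.
\item It shows in passing that $X_T\in L^p$ iff $X_T^{(r)}\in L^p$, so one hypothesis is redundant.
\end{itemize}
The paper's route buys an explicit constant, whereas you only show $\sup_s|\phi(s)|<\infty$.

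You can in fact make your constant explicit. By the power-mean inequality and subadditivity of $x\mapsto x^{p/2}$,
\[
\tfrac12\bigl(|y+h|^p+|y-h|^p\bigr)\le (y^2+h^2)^{p/2}\le |y|^p+|h|^p,\qquad 0<p\le 2.
\]
On the other side, the second difference is at least $-2|h|^p$, by subadditivity of $|\cdot|^p$ for $p\le1$ and by convexity for $p\ge1$. Hence $C_p=1$ works. Incidentally, the paper's own chain also gives $W_p(r)\le \E|R_T^{(r)}|^p$ with constant $1$, if Lemma~\ref{lem:phi} is applied to $R_T^{(r)}$ directly instead of using the $\min(2,\cdot)$ bound.

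One small slip: the lower bound $0\le |y+h|^p+|y-h|^p-2|y|^p$ that you state is false for $p<1$; take $y=h=1$, which gives $2^p-2<0$. Since you only use $\sup_s|\phi(s)|$, this does not affect the argument.
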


\section{Proof of Theorem~\ref{thm:global-opt}}\label{sec:universal-shape}

We begin with a few auxiliary lemmas. Let
\begin{equation}\label{eq:set-A}
    \mathcal A\coloneq
    \left\{
    u\in L^1([0,T]) :
    u(t)\ge N(1) \quad \text{a.e.},\quad
    \int_0^T u(t)\,\dd t = J_0
    \right\},
\end{equation}
and define
\[
\Phi(t)\coloneq V\bigl(N^{-1}(t)\bigr),\quad
\mathcal A_{\mathrm{fin}}\coloneq \{u\in\mathcal A: I(u)<\infty\}.
\]

\begin{lemma}\label{lem:reparam-convex}
The constrained problem
\[
\min_r \EE(r)
\qquad\text{subject to}\qquad
\JJ(r)=J_0
\]
over measurable cutoffs $r:[0,T]\to(0,1]$ is equivalent to the problem
\begin{equation}\label{Iu}
\min_{u\in\mathcal A} I(u),
\qquad I(u)\coloneq \int_0^T t^{-2\sigma}\Phi(u(t))\,\dd t \in [0,\infty].
\end{equation}
Moreover, $\Phi$ is strictly convex on $[N(1),\infty)$, and $I$ is strictly convex on
$\mathcal A_{\mathrm{fin}}$.
\end{lemma}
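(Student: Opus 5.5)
The plan is to prove the lemma in two parts: first the equivalence, via the change of variables $u(t)=N(r(t))$, and then the convexity statements, by computing $\Phi'$ explicitly.

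\emph{Equivalence.} By (A1), $N$ is a continuous, strictly decreasing bijection from $(0,1]$ onto $[N(1),N(0+))$, and $N(0+)=\infty$ by (A2). So $N^{-1}:[N(1),\infty)\to(0,1]$ is well defined, continuous and strictly decreasing.

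Given a measurable cutoff $r:[0,T]\to(0,1]$, set $u\coloneq N\circ r$. This is measurable, satisfies $u\ge N(1)$, and has $\int_0^T u=\JJ(r)=J_0$, so $u\in L^1$ and $u\in\mathcal A$. Moreover
\[
t^{-2\sigma}V(r(t))=t^{-2\sigma}\Phi(u(t)),
\]
so $\EE(r)=I(u)$.

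Conversely, given $u\in\mathcal A$, put $r\coloneq N^{-1}\circ u$. This is measurable and takes values in $(0,1]$; values where $u=\infty$ form a null set and can be modified. Then $N\circ r=u$, hence $\JJ(r)=J_0$ and $\EE(r)=I(u)$. The maps $r\mapsto u$ and $u\mapsto r$ are mutually inverse and respect a.e.\ equality. Therefore the two problems have the same values, and their minimizers correspond.

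\emph{Convexity of $\Phi$.} Write $\rho=N^{-1}(s)$. Since $V(\rho)=\int_{|z|\le\rho}z^2\,\nu(\dd z)$ and $N(\rho)=\nu(\{|z|>\rho\})$, the Stieltjes relation $\dd V(\rho)=-\rho^2\,\dd N(\rho)$ holds. Because $N\in C^1$ with $N'<0$, both measures are absolutely continuous on $(0,1]$, and $V'(\rho)=-\rho^2N'(\rho)$.

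By the chain rule,
\[
\Phi'(s)=V'(\rho)\,(N^{-1})'(s)=\frac{-\rho^2N'(\rho)}{N'(\rho)}=-\bigl(N^{-1}(s)\bigr)^2 .
\]
This derivative is continuous, and it is strictly increasing in $s$ because $N^{-1}$ is strictly decreasing and positive. A $C^1$ function with strictly increasing derivative is strictly convex, so $\Phi$ is strictly convex on $[N(1),\infty)$.

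The main technical care goes into justifying $V'=-\rho^2N'$. One way is via Fubini: for $0<a<b\le1$,
\[
V(b)-V(a)=\int_{a<|z|\le b}z^2\,\nu(\dd z)=-\int_a^b\rho^2\,\dd N(\rho).
\]
One then differentiates, using that $N$ is $C^1$. Equivalently, one can avoid derivatives entirely. For $s_1<s_2$ write
\[
\Phi(s_1)-\Phi(s_2)=\int_{\rho_2<|z|\le\rho_1}z^2\,\nu(\dd z),
\]
and bound it above and below by $\rho_1^2$ and $\rho_2^2$ times $(s_2-s_1)$. This shows the difference quotients are monotone, which gives strict convexity directly.

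\emph{Convexity of $I$.} Take $u,w\in\mathcal A_{\mathrm{fin}}$ and $\lambda\in(0,1)$. The set $\mathcal A$ is convex, so $\lambda u+(1-\lambda)w\in\mathcal A$. Pointwise convexity of $\Phi$ together with $t^{-2\sigma}>0$ gives
\[
I(\lambda u+(1-\lambda)w)\le\lambda I(u)+(1-\lambda)I(w)<\infty,
\]
so $\mathcal A_{\mathrm{fin}}$ is convex as well. If $u\ne w$ on a set of positive measure, then the pointwise inequality is strict on that set. Integrating against the positive weight yields a strict inequality, because both sides are finite. This gives strict convexity of $I$ on $\mathcal A_{\mathrm{fin}}$.
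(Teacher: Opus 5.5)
Your proof is correct and follows the paper's route: the substitution $u=N\circ r$, the identity $V'(\rho)=-\rho^2N'(\rho)$ giving $\Phi'(s)=-\bigl(N^{-1}(s)\bigr)^2$, and pointwise strict convexity of $\Phi$ integrated against the positive weight $t^{-2\sigma}$. The differences are cosmetic. You conclude from strict monotonicity of $\Phi'$ (or from chord slopes) rather than from the paper's $\Phi''(u)=-2r/N'(r)>0$. You also spell out the inverse map $u\mapsto N^{-1}\circ u$ and the convexity of $\mathcal A_{\mathrm{fin}}$, which the paper leaves implicit.
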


\begin{proof}
By Assumption~\ref{ass:global-opt}(A1), the function $N:(0,1]\to[N(1),\infty)$ is continuous and strictly decreasing.
Since $N(0+)=\infty$, it is bijective.
Define $u(t)\coloneq N(r(t))$. Clearly, $u$ is measurable, $u(t)\ge N(1)$ for all $t$, and by strict monotonicity of $N$ we have $r(t)=N^{-1}(u(t))$, and vice versa.
Under this change of variables, we can rewrite
\[
\JJ(r)=\int_0^T N(r(t))\,\dd t=\int_0^T u(t)\,\dd t,
\]
and
\[
\EE(r)
=
\int_0^T t^{-2\sigma}V(r(t))\,\dd t
=
\int_0^T t^{-2\sigma}V\bigl(N^{-1}(u(t))\bigr)\,\dd t
=
I(u).
\]
Hence the original problem is equivalent to minimizing the possibly infinite-valued functional $I$ over $\mathcal A$.

Let us show the convexity. Since
\[
V(\rho)=2 \int_0^\rho s^2\,\nu(\dd s)=\int_0^\rho s^2\bigl(-N'(s)\bigr)\,\dd s
\]
and $u\mapsto -u^2N'(u)$ is continuous on $(0,1]$, it follows that
\begin{equation}\label{V10}
V\in C^1((0,1])
\qquad\text{and}\qquad
V'(\rho)=-\rho^2N'(\rho),\qquad \rho\in(0,1].
\end{equation}
For $u>N(1)$, let $r=N^{-1}(u)$. Since $N\in C^1((0,1])$, the inverse function theorem and \eqref{V10} yield
\[
\Phi'(u)
=V'(r)(N^{-1})'(u)
=-r^2.
\]
Differentiating once more,
\[
\Phi''(u)
=-2r\,(N^{-1})'(u)
=-\frac{2r}{N'(r)}>0,
\]
because $r>0$ and $N'(r)<0$. Hence $\Phi$ is strictly convex on $(N(1),\infty)$. Since $N^{-1}(u)\uparrow 1$ as $u\downarrow N(1)$, the right derivative of $\Phi$ exists and equals
\[
\Phi_+'\bigl(N(1)\bigr)=\lim_{u\downarrow N(1)}\Phi'(u)=-1,
\]
so $\Phi$ is strictly convex on $[N(1),\infty)$.

Finally, the set $\mathcal A$ is convex, and $t^{-2\sigma}>0$ for $t\in(0,T]$. Therefore the strict convexity of $\Phi$ on $[N(1),\infty)$ implies that $I(u)$ is strictly convex on $\mathcal A_{\mathrm{fin}}$.
\end{proof}

\begin{lemma}\label{lem:V-power}
Let Assumption~\ref{ass:global-opt} hold. Then there exists a constant $C_V>0$ such that
\begin{equation}\label{eq:V-power-bound}
    V(\rho)\le C_V \rho^{2-\alpha},
    \qquad \rho\in(0,1].
\end{equation}
\end{lemma}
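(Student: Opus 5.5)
The plan is to express $V$ through the tail function $N$ by integrating by parts, and then insert the power bound (A3). From the proof of Lemma~\ref{lem:reparam-convex} we already have the representation
\[
V(\rho)=\int_0^\rho s^2\bigl(-N'(s)\bigr)\,\dd s ,\qquad \rho\in(0,1].
\]
More robustly, we can use the layer-cake form $V(\rho)=\int_{\abs{z}\le\rho} z^2\,\nu(\dd z)$ together with $z^2=\int_0^{\abs{z}}2s\,\dd s$. By Tonelli's theorem this gives
\[
V(\rho)=\int_0^\rho 2s\,\nu(\{s<\abs{z}\le\rho\})\,\dd s=\int_0^\rho 2s\bigl(N(s)-N(\rho)\bigr)\,\dd s .
\]
This identity holds for any L\'evy measure and needs no differentiability. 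Since $N(\rho)\ge 0$, we may drop the subtracted term.

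Next, insert \eqref{eq:N-power-tail}, which gives $N(s)\le C_\alpha s^{-\alpha}$ for $s\in(0,\rho]\subset(0,1]$. Then
\[
V(\rho)\le 2C_\alpha\int_0^\rho s^{1-\alpha}\,\dd s=\frac{2C_\alpha}{2-\alpha}\,\rho^{2-\alpha}.
\]
The integral converges precisely because $\alpha<2$. So we may take $C_V=2C_\alpha/(2-\alpha)$.

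The only delicate point is justifying the interchange of integrals and the finiteness of each term. Tonelli's theorem applies because the integrand is nonnegative. Finiteness of $N(s)$ for $s>0$ follows from \eqref{eq:levy-integrability}. No boundary term at $0$ has to be controlled, which is exactly why the layer-cake form is preferable to integrating $s^2N'(s)$ by parts. That alternative would require showing $s^2N(s)\to0$ as $s\to0$, which is also true here since $s^2N(s)\le C_\alpha s^{2-\alpha}$, but the layer-cake form avoids the issue entirely.
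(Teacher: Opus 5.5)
Your proof is correct and is essentially the paper's argument: both arrive at $V(\rho)=2\int_0^\rho sN(s)\,\dd s-\rho^2N(\rho)$, drop the nonpositive term, and insert (A3), obtaining the same constant $C_V=2C_\alpha/(2-\alpha)$. The only difference is that you derive this identity by Tonelli (the layer-cake formula), whereas the paper integrates $s^2(-N'(s))$ by parts. Your route avoids using (A1) and the boundary term at $0$, which the paper controls implicitly through (A3).
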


\begin{proof}
Fix $\rho\in(0,1]$. Using \eqref{V10} and Assumption~\ref{ass:global-opt}(A3), we get by integration by parts
\[
V(\rho)
=
-\rho^2N(\rho)+2\int_0^\rho sN(s)\,\dd s
\le
2\int_0^\rho sN(s)\,\dd s.
\]
Applying again Assumption~\ref{ass:global-opt}(A3), we conclude that
\begin{equation}\label{CV}
V(\rho)
\le
2C_\alpha \int_0^\rho s^{1-\alpha}\,\dd s
=
\frac{2C_\alpha}{2-\alpha}\rho^{2-\alpha}
= C_V \rho^{2-\alpha}.
\end{equation}
\end{proof}

\begin{lemma}\label{lem:cost-map}
Let Assumption~\ref{ass:global-opt} hold, and suppose that
\begin{equation}\label{eq:alpha-sigma}
    \alpha\sigma<1.
\end{equation}
For $c>0$, define
\[
r_c(t)\coloneq (ct^\sigma)\wedge 1,
\qquad
F(c)\coloneq \int_0^T N(r_c(t))\,\dd t.
\]
Then $F(c)<\infty$ and $\EE(r_c)<\infty$ for every $c>0$, and $F$ is continuous on $(0,\infty)$.
Moreover:
\begin{enumerate}[label=(\roman*),itemsep=2pt,topsep=4pt]
\item if $\sigma>0$, then $F$ is strictly decreasing on $(0,\infty)$;
\item if $\sigma=0$, then $F$ is strictly decreasing on $(0,1]$
      and constant on $[1,\infty)$.
\end{enumerate}
In all cases,
\[
\lim_{c\downarrow 0}F(c)=\infty,
\qquad
\lim_{c\to\infty}F(c)=TN(1).
\]
Consequently, for every $J_0>TN(1)$ there exists a unique constant $c^\ast>0$ such that
\[
F(c^\ast)=J_0.
\]
\end{lemma}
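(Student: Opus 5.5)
The plan is to treat the finiteness claims first, then continuity, then monotonicity, then the two limits, and to deduce the existence and uniqueness of $c^\ast$ from the intermediate value theorem.

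\emph{Finiteness.} Assumption~\ref{ass:global-opt}(A3) gives
\[
N(r_c(t))\le C_\alpha\bigl(c^{-\alpha}t^{-\alpha\sigma}+1\bigr),
\]
which holds because either $r_c(t)=ct^\sigma$ or $r_c(t)=1$. Since $\alpha\sigma<1$, the right-hand side is integrable on $(0,T]$, so $F(c)<\infty$. Lemma~\ref{lem:V-power} gives
\[
t^{-2\sigma}V(r_c(t))\le C_V\,t^{-2\sigma}(ct^\sigma)^{2-\alpha}=C_Vc^{2-\alpha}\,t^{-\alpha\sigma},
\]
and this is also integrable. Hence $\EE(r_c)<\infty$.

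\emph{Continuity.} Fix a compact interval $[c_0,c_1]\subset(0,\infty)$. For $c\ge c_0$ we have $N(r_c(t))\le N(r_{c_0}(t))$, because $r_c\ge r_{c_0}$ and $N$ is decreasing, and $N(r_{c_0})$ is integrable by the first step. For each fixed $t>0$, the map $c\mapsto N(r_c(t))$ is continuous, since $N$ is continuous on $(0,1]$. Dominated convergence therefore gives continuity of $F$.

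\emph{Monotonicity.} Let $c<c'$. Then $r_c\le r_{c'}$ pointwise, with strict inequality exactly on the set $\{t: ct^\sigma<1\}$. On that set $N(r_c(t))>N(r_{c'}(t))$, because $N$ is strictly decreasing by (A1).
\begin{itemize}
\item If $\sigma>0$, the set $\{t:ct^\sigma<1\}$ contains $(0,(1/c)^{1/\sigma}\wedge T)$, which has positive measure. Hence $F(c)>F(c')$.
\item If $\sigma=0$, then $r_c\equiv c\wedge 1$, so $F(c)=TN(c\wedge 1)$. This is strictly decreasing on $(0,1]$ and constant on $[1,\infty)$.
\end{itemize}

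\emph{Limits.} As $c\downarrow 0$, for each $t>0$ we have $r_c(t)\to 0$, so $N(r_c(t))\to\infty$ by (A2). Fatou's lemma then gives $F(c)\to\infty$. As $c\to\infty$, for each $t>0$ we have $r_c(t)=1$ once $c$ is large enough, so $N(r_c(t))\to N(1)$. Dominated convergence with the integrable bound $N(r_{1})$, valid for $c\ge 1$, gives $F(c)\to TN(1)$.

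\emph{Existence and uniqueness of $c^\ast$.} Take $J_0>TN(1)$. By continuity and the two limits, the intermediate value theorem produces some $c^\ast$ with $F(c^\ast)=J_0$.
\begin{itemize}
\item If $\sigma>0$, strict monotonicity of $F$ gives uniqueness.
\item If $\sigma=0$, every $c\ge 1$ gives $F(c)=TN(1)<J_0$. So any solution lies in $(0,1)$, where $F$ is strictly decreasing, and uniqueness follows.
\end{itemize}

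The argument is routine throughout. The only point needing care is choosing the integrable dominating function for continuity near $t=0$, and that is exactly where the hypothesis $\alpha\sigma<1$ is used.
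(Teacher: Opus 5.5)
Your proof is correct and follows essentially the same route as the paper: (A3) and Lemma~\ref{lem:V-power} give integrable power bounds for finiteness, dominated convergence gives continuity and the limit as $c\to\infty$, monotonicity of $N$ gives the monotonicity of $F$, and the intermediate value theorem gives $c^\ast$ (you use Fatou for $c\downarrow 0$ where the paper uses $F(c)\ge TN(cT^\sigma)$, which is immaterial). A few of your steps are slightly tighter than the paper's: the uniform bound $t^{-2\sigma}V(r_c(t))\le C_V c^{2-\alpha}t^{-\alpha\sigma}$, the explicit positive-measure argument for strict decrease when $\sigma>0$, and the explicit uniqueness argument for $\sigma=0$, which the paper leaves implicit.
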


\begin{proof}
By Assumption~\ref{ass:global-opt}(A3), for every $t\in(0,T]$, $N(r_c(t))\le C_\alpha c^{-\alpha}t^{-\alpha\sigma}$
on the set $\{t\in(0,T]: ct^\sigma<1\}$, while $N(r_c(t))=N(1)$ on the set
$\{t\in(0,T]: ct^\sigma\ge 1\}$. Hence
\begin{equation}\label{eq:F-dom}
    N(r_c(t))\le N(1)\mathbf{1}_{\{ct^\sigma\ge 1\}}
    + C_\alpha c^{-\alpha}t^{-\alpha\sigma}\mathbf{1}_{\{ct^\sigma<1\}},
    \qquad t\in(0,T].
\end{equation}
Because $\alpha\sigma<1$,
$F(c)<\infty$ for every $c>0$.
Next, by Lemma~\ref{lem:V-power},
$V(r_c(t))\le C_V c^{2-\alpha}t^{\sigma(2-\alpha)}$
on the set $\{t\in(0,T]: ct^\sigma<1\}$, while $V(r_c(t))=V(1)$ on
$\{t\in(0,T]: ct^\sigma\ge 1\}$. Therefore,
\begin{align*}
t^{-2\sigma}V(r_c(t))
&\le
V(1)t^{-2\sigma} \mathbf{1}_{\{ct^\sigma\ge 1\}} + C_V c^{2-\alpha} t^{-\alpha\sigma}\mathbf{1}_{\{ct^\sigma<1\}}\\
&\le
V(1)c^2 \mathbf{1}_{\{ct^\sigma\ge 1\}} + C_V c^{2-\alpha} t^{-\alpha\sigma}\mathbf{1}_{\{ct^\sigma<1\}}.
\end{align*}
Again using $\alpha\sigma<1$, we conclude that $\EE(r_c)<\infty$ for every $c>0$.

Fix $c_0>0$. For $c\in[c_0/2,2c_0]$,
\[
N(r_c(t))
\le N(1)+C_\alpha (c_0/2)^{-\alpha} t^{-\alpha\sigma},
\qquad t\in(0,T],
\]
and the right-hand side is integrable because $\alpha\sigma<1$. Since $r_c(t)\to r_{c_0}(t)$ pointwise, dominated convergence yields continuity of $F$ at $c_0$, hence on $(0,\infty)$.

Further, since $c\mapsto r_c(t)$ is increasing and $N$ is decreasing, we see that $c\mapsto F(c)$ is decreasing; in particular, it is strictly decreasing if $\sigma>0$. If $\sigma=0$, then $r_c(t)=c\wedge 1$ for all $t$, and therefore $F(c)=TN(c\wedge 1)$, i.e. in this case $F$ is strictly decreasing on $(0,1]$ and constant on $[1,\infty)$.

Let us show the last statement of the lemma. As $c\to\infty$, we get by monotonicity $r_c(t)\uparrow 1$, hence $N(r_c(t))\downarrow N(1)$, and thus dominated convergence yields
\[
\lim_{c\to\infty}F(c)=\int_0^T N(1)\,\dd t=TN(1).
\]
As $c\downarrow 0$, choose $c$ so small that $cT^\sigma\le 1$.
Then on $[0,T]$, $r_c(t)=ct^\sigma$, and therefore
\[
F(c)=\int_0^T N(ct^\sigma)\,\dd t\ge \int_0^T N(cT^\sigma)\,\dd t = T N(cT^\sigma).
\]
Using again monotonicity, we see that $\lim_{c\downarrow 0}F(c)=\infty$, which proves the existence of a solution to $F(c^\ast)=J_0$ for every $J_0>TN(1)$.
\end{proof}

\begin{proof}[Proof of Theorem~\ref{thm:global-opt}]
Recall the definition of $\mathcal A$ from \eqref{eq:set-A}.
For part (i), if $u\in\mathcal A$, then $u(t)\ge N(1)$ a.e. and
\[
\int_0^T \bigl(u(t)-N(1)\bigr)\,\dd t=0,
\]
implying that $u=N(1)$ a.e. Thus $\mathcal A$ contains only one element,
namely $u^\ast\equiv N(1)$, and the corresponding cutoff is
\[
r^\ast=N^{-1}\bigl(N(1)\bigr)=1
\qquad\text{on }[0,T].
\]
Because $\sigma<1/2$, we have
\[
\EE(r^\ast)=V(1)\int_0^T t^{-2\sigma}\,\dd t<\infty,
\]
so this cutoff is the unique minimizer.

We now prove part (ii). By Lemma~\ref{lem:cost-map}, there exists a unique $c^\ast>0$
such that
\[
\int_0^T N\bigl((c^\ast t^\sigma)\wedge 1\bigr)\,\dd t=J_0.
\]
Define
\[
r^\ast(t)\coloneq (c^\ast t^\sigma)\wedge 1,
\qquad
u^\ast(t)\coloneq N(r^\ast(t)).
\]
Then $u^\ast\in\mathcal A$, and Lemma~\ref{lem:cost-map} yields $\EE(r^\ast)<\infty$.

By Lemma~\ref{lem:reparam-convex}, the original problem is equivalent to minimizing $I(u)$ from \eqref{Iu} over $\mathcal A$.

Let $u\in\mathcal A$ be arbitrary. If $I(u)=\infty$, then automatically
$I(u)\ge I(u^\ast)$, so there is nothing to prove. Assume from now on that $I(u)<\infty$.

Because the (right) derivative of $\Phi$ at $u^\ast(t)$ equals $-(r^\ast(t))^2$,
the convexity of $\Phi$ yields
\[
\Phi(u(t))-\Phi(u^\ast(t))\ge -(r^\ast(t))^2\bigl(u(t)-u^\ast(t)\bigr)
\qquad\text{for a.e. }t\in[0,T].
\]
Since both $u,u^\ast\in\mathcal A$, they have the same equality constraint, so
\[
\int_0^T \bigl(u(t)-u^\ast(t)\bigr)\,\dd t=0.
\]
Multiplying by $t^{-2\sigma}$, integrating, and adding the zero term
$\int_0^T (c^\ast)^2 \bigl(u(t)-u^\ast(t)\bigr)\,\dd t$, we obtain
\[
I(u)-I(u^\ast)
\ge
\int_0^T \left((c^\ast)^2-t^{-2\sigma}(r^\ast(t))^2\right)\bigl(u(t)-u^\ast(t)\bigr)\,\dd t.
\]
On the set where $r^\ast(t)<1$, we have $r^\ast(t)=c^\ast t^\sigma$, and therefore
$(c^\ast)^2-t^{-2\sigma}(r^\ast(t))^2=0$.

On the set where $r^\ast(t)=1$, we have $c^\ast t^\sigma\ge 1$; consequently,
$(c^\ast)^2-t^{-2\sigma}(r^\ast(t))^2\ge 0$.
Moreover, on this same set,
$u^\ast(t)=N(1)$,
while every $u\in\mathcal A$ satisfies $u(t)\ge N(1)$ a.e. Hence
\[
u(t)-u^\ast(t)\ge 0
\qquad\text{for a.e. }t\text{ with }r^\ast(t)=1.
\]
Therefore
\[
\left((c^\ast)^2-t^{-2\sigma}(r^\ast(t))^2\right)\bigl(u(t)-u^\ast(t)\bigr)\ge 0
\qquad\text{for a.e. }t\in[0,T],
\]
implying that
\[
I(u)\ge I(u^\ast)
\qquad\text{for every }u\in\mathcal A.
\]

Thus, $u^\ast$ is a global minimizer of $I$ on $\mathcal A$.

Since $I$ is strictly convex on $\mathcal A_{\mathrm{fin}}$, it has at most one minimizer there. As $u^\ast\in\mathcal A_{\mathrm{fin}}$ and $I(u)\ge I(u^\ast)$ for every $u\in\mathcal A$, while functions with $I(u)=\infty$ cannot minimize, it follows that $u^\ast$ is the unique minimizer on $\mathcal A$. Therefore the corresponding cutoff $r^\ast=N^{-1}(u^\ast)$ is unique.

Finally, if $c^\ast T^\sigma\le 1$, then $(c^\ast t^\sigma)\wedge 1=c^\ast t^\sigma$,
so $r^\ast(t)=c^\ast t^\sigma$ for all $t\in(0,T]$.
\end{proof}

\begin{remark}\label{rem:global-opt-extended}
Under Assumption~\ref{ass:global-opt}(A3), part~(ii) of Theorem~\ref{thm:global-opt}
extends the optimization result to the range $\alpha\sigma<1$,
which may allow $\sigma\ge 1/2$. The boundary case $J_0=TN(1)$ is kept only under
$\sigma<1/2$, because then the unique feasible cutoff is $r\equiv 1$, while for
$\sigma\ge 1/2$ one generally has
\[
\EE(1)=V(1)\int_0^T t^{-2\sigma}\,\dd t=\infty.
\]
\end{remark}

\section{Proof of Theorem~\ref{thm:matched-cost} and Proposition~\ref{cor:matched-error}}\label{sec:stable}

\begin{proof}[Proof of Theorem~\ref{thm:matched-cost}]
\emph{Parts~(i)--(ii).} If $1-2\sigma+\varepsilon\theta>0$, then
\[
\EE(r_{c,\varepsilon}) = K c^{2-\alpha}\int_0^T t^{-2\sigma+\varepsilon(2-\alpha)/\alpha}\,\dd t
= K c^{2-\alpha}\frac{T^{1-2\sigma+\varepsilon\theta}}{1-2\sigma+\varepsilon\theta}.
\]
Substituting $c = \left(\frac{2M T^{1-\varepsilon}}{(1-\varepsilon)J}\right)^{1/\alpha}$ proves \eqref{eq:E-epsilon-cost}. Since $\sigma<1/2$, taking the quotient with $\varepsilon=0$ yields \eqref{eq:R-epsilon}.

\emph{Part~(iii).} To optimize, we note that $\log R(\varepsilon)$ is strictly convex, as its second derivative is strictly positive. Setting the first derivative to zero yields
\begin{equation}\label{eq:logR-derivative}
    \frac{\dd}{\dd\varepsilon}\log R(\varepsilon) = \frac{\theta}{1-\varepsilon} - \frac{\theta}{1-2\sigma+\varepsilon\theta} = 0.
\end{equation}
Since $1+\theta=2/\alpha>0$, we obtain the unique minimizer $\varepsilon^\ast=\alpha\sigma$.
Under the condition $J \ge \frac{2MT}{1-\alpha\sigma}$, this critical point belongs to the admissible interval $\left[0,1-\frac{2MT}{J}\right]$.

Finally, if $0<\sigma<1/2$, then $R(\varepsilon^\ast)<1=R(0)$, while if $\sigma=0$, then $\varepsilon^\ast=0$ and the DC cutoff coincides with the fixed one.
\end{proof}

\begin{remark}
The dependence on the parameters and the magnitude can be seen in Figure~\ref{fig:R-viz}. The stronger the singularity, the more pronounced the advantage of DC over fixed cutoff.
\end{remark}

\begin{figure}[htbp]
    \centering
    \includegraphics[width=0.85\textwidth]{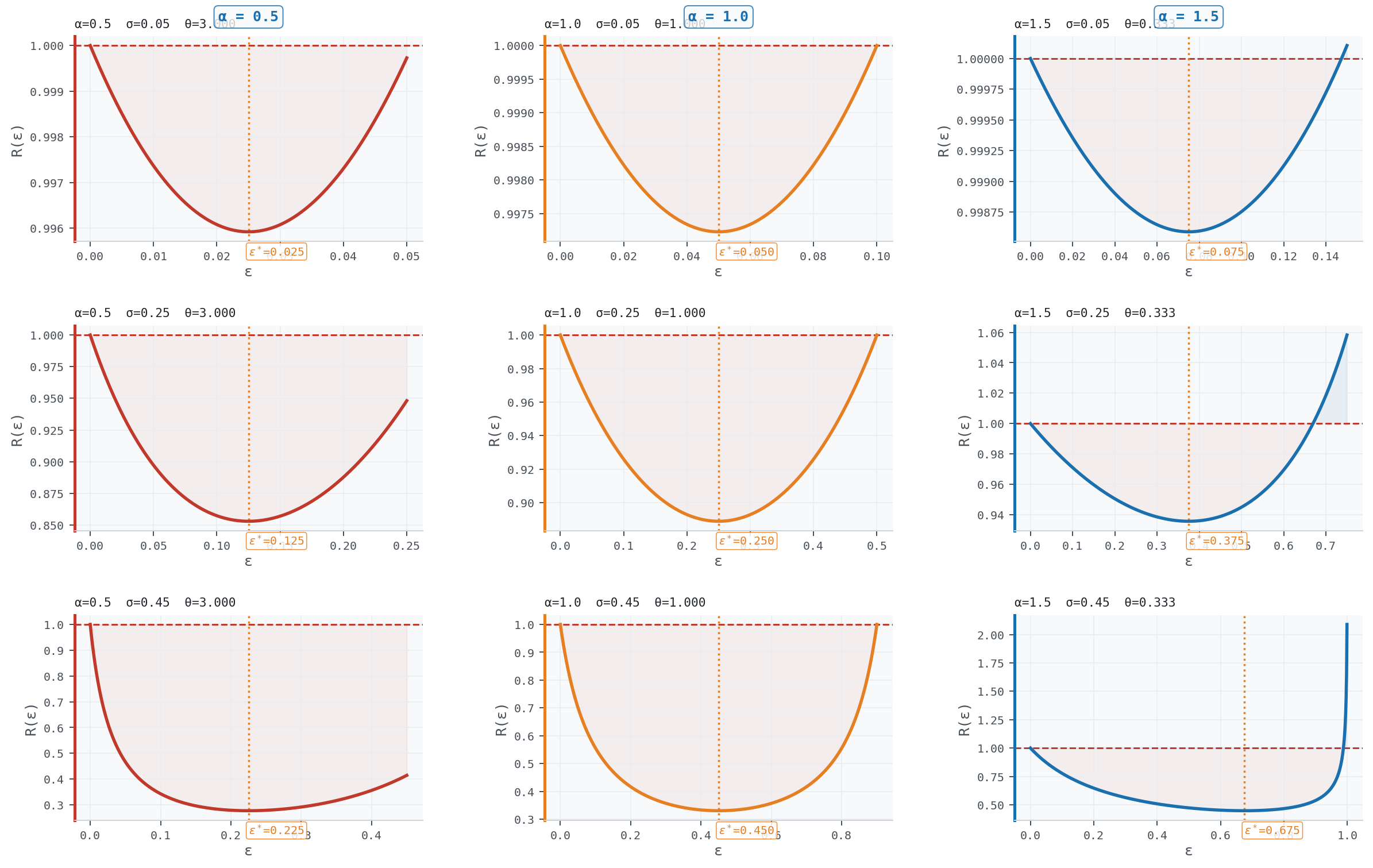}
    \caption{Visualization of $R(\varepsilon)$.}
    \label{fig:R-viz}
\end{figure}

\begin{proof}[Proof of Proposition~\ref{cor:matched-error}]
Recall from the proof of Theorem~\ref{thm:matched-cost} that
$\EE(r_{c,\varepsilon})
=
\frac{K c^{2-\alpha} T^{1-2\sigma+\varepsilon\theta}}{1-2\sigma+\varepsilon\theta}$.
Let $v =
K c^{2-\alpha}\frac{T^{1-2\sigma+\varepsilon\theta}}{1-2\sigma+\varepsilon\theta}$.
Since $2-\alpha=\alpha\theta$, solving $\EE(r_{c,\varepsilon})=v$ gives
\[
c^{-\alpha}
=
\left(
 \frac{K}{v} \frac{T^{1-2\sigma+\varepsilon\theta}}{1-2\sigma+\varepsilon\theta}
\right)^{1/\theta},
\]
which, after substituting into $\JJ(r_{c,\varepsilon})
=
\frac{2M}{1-\varepsilon}c^{-\alpha}T^{1-\varepsilon}
$ (cf. \eqref{eq:cost-error-functionals})
yields
\[
J_\varepsilon(v)
=
\frac{2M}{1-\varepsilon}T^{1-\varepsilon}
\left(
\frac{K}{v} \frac{T^{1-2\sigma+\varepsilon\theta}}{1-2\sigma+\varepsilon\theta}
\right)^{1/\theta}.
\]
Dividing by the corresponding expression for $\varepsilon=0$, we obtain
\[
R_{\mathrm{err}}(\varepsilon;v)
=
\left[
\frac{1-2\sigma}{1-2\sigma+\varepsilon\theta}(1-\varepsilon)^{-\theta}
\right]^{1/\theta}
=
\bigl(R(\varepsilon)\bigr)^{1/\theta}.
\]
Since $x\mapsto x^{1/\theta}$ is strictly increasing on $(0,\infty)$, the minimizer over the same admissible non-truncated class is the same as for $R$. The final claim follows from Theorem~\ref{thm:matched-cost} on that same admissible class.
\end{proof}

\section{Proof of Theorem~\ref{thm:weak-bound}}\label{sec:weak}

We begin with an auxiliary lemma.

\begin{lemma}\label{lem:phi}
Let $\varphi_Y(u)\coloneq \E[e^{iuY}]$ be the characteristic function of a random variable $Y$. For $0<p<2$, assume that $Y\in L^p$. Then there exists a constant $c_p>0$ such that
\[
\E\abs{Y}^p = c_p\int_{\R}\frac{1-\operatorname{Re}\bigl(\varphi_Y(s)\bigr)}{\abs{s}^{1+p}}\,\dd s.
\]
\end{lemma}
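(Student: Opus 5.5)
The plan is to derive the formula from the elementary scalar identity
\[
\abs{x}^p = c_p\int_{\R}\frac{1-\cos(sx)}{\abs{s}^{1+p}}\,\dd s,\qquad x\in\R,\ 0<p<2,
\]
and then take expectations, interchanging expectation and integral by Tonelli's theorem. Taking expectations is the same as replacing $\cos(sx)$ by $\operatorname{Re}\varphi_Y(s)$.

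First, for $x=0$ both sides of the scalar identity vanish. For $x\neq 0$, substitute $w=s\abs{x}$ and use evenness of cosine:
\[
\int_{\R}\frac{1-\cos(sx)}{\abs{s}^{1+p}}\,\dd s=\abs{x}^p\int_{\R}\frac{1-\cos w}{\abs{w}^{1+p}}\,\dd w.
\]
The problem then reduces to showing that
\[
\kappa_p\coloneq\int_{\R}\frac{1-\cos w}{\abs{w}^{1+p}}\,\dd w
\]
is finite and positive, and setting $c_p\coloneq 1/\kappa_p$.

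Positivity is clear, since the integrand is nonnegative and not identically zero. Finiteness comes from splitting at $\abs{w}=1$:
\begin{itemize}[itemsep=2pt,topsep=4pt]
\item near the origin, $1-\cos w\le w^2/2$, so the integrand is bounded by $\abs{w}^{1-p}/2$, which is integrable on $\{\abs{w}\le 1\}$ because $p<2$;
\item at infinity, $1-\cos w\le 2$, so the integrand is bounded by $2\abs{w}^{-1-p}$, which is integrable on $\{\abs{w}>1\}$ because $p>0$.
\end{itemize}
This is the only place where the restriction $0<p<2$ enters, and it is the step needing the most care, though it is routine. If desired, $\kappa_p$ can be computed explicitly as $\kappa_p=\pi/\bigl(\Gamma(1+p)\sin(\pi p/2)\bigr)$, but the statement only needs $c_p\in(0,\infty)$.

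Next, apply the identity with $x=Y(\omega)$ and take expectations. The integrand $(1-\cos(sY))\abs{s}^{-1-p}$ is nonnegative and jointly measurable in $(s,\omega)$, so Tonelli's theorem gives
\[
\E\abs{Y}^p=c_p\int_{\R}\frac{\E\bigl[1-\cos(sY)\bigr]}{\abs{s}^{1+p}}\,\dd s,
\]
with both sides possibly infinite. Finally, $\E\cos(sY)=\operatorname{Re}\varphi_Y(s)$, which yields the claim. The hypothesis $Y\in L^p$ is needed only so that both sides are finite; the identity itself holds in $[0,\infty]$ regardless.
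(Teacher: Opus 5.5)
Your proof is correct and follows essentially the same route as the paper: Tonelli to interchange expectation and integral, the scaling substitution to pull out $\abs{Y}^p$, and identification of the constant. The only difference is that you show $0<\kappa_p<\infty$ by elementary bounds near $0$ and at infinity, whereas the paper evaluates the constant with the Gradshteyn--Ryzhik table integral; your explicit value $\pi/\bigl(\Gamma(1+p)\sin(\pi p/2)\bigr)$ agrees with the paper's because $p\,\Gamma(p)=\Gamma(1+p)$.
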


\begin{proof}
By definition, the real part of the characteristic function is $\operatorname{Re}\bigl(\varphi_Y(s)\bigr) = \E[\cos(sY)]$. Since $1-\cos(sY) \ge 0$, Tonelli's theorem allows us to interchange the integral and the expectation:
\begin{align*}
\int_{\R}\frac{1-\operatorname{Re}\bigl(\varphi_Y(s)\bigr)}{\abs{s}^{1+p}}\,\dd s
&= \E\left[\int_{\R} \frac{1-\cos(sY)}{\abs{s}^{1+p}}\,\dd s\right].
\end{align*}
For $Y\neq 0$, making the change of variables $v=sY$ yields
\[
\int_{\R}\frac{1-\cos(sY)}{\abs{s}^{1+p}}\,\dd s
=
\abs{Y}^p \int_{\R}\frac{1-\cos(v)}{\abs{v}^{1+p}}\,\dd v.
\]
The same identity holds when $Y=0$, since both sides are zero.
Thus,
\[
\int_{\R}\frac{1-\operatorname{Re}\bigl(\varphi_Y(s)\bigr)}{\abs{s}^{1+p}}\,\dd s
=
\E \abs{Y}^p \int_{\R} \frac{1-\cos(v)}{\abs{v}^{1+p}}\,\dd v.
\]
The last integral can be calculated explicitly using $1-\cos(v) = 2\sin^2(v/2)$ and \cite[(3.823)]{GradshteynRyzhik2015}:
\[
\int_{0}^{\infty} x^{\mu-1} \sin^2(ax)\,\dd x = -\frac{\Gamma(\mu) \cos \frac{\mu\pi}{2}}{2^{\mu+1}a^\mu}
\qquad a > 0, \quad -2 < \operatorname{Re} \mu < 0.
\]
Applying this formula with $\mu = -p$, and $a = 1/2$,
\[
c_p^{-1} \coloneq \int_{\R} \frac{1-\cos(v)}{\abs{v}^{1+p}}\,\dd v
= 4 \int_{0}^{\infty} \frac{\sin^2(v/2)}{v^{1+p}}\,\dd v
= \frac{\pi}{p\,\Gamma(p)\sin(p\pi/2)}.
\]
Substituting this constant back into the expectation yields the desired result.
\end{proof}

\begin{proof}[Proof of Theorem~\ref{thm:weak-bound}]
Denote by $\varphi_Y(u)\coloneq \E[e^{iuY}]$ the characteristic function of $Y$.
Then applying Lemma~\ref{lem:phi} to $X_T$ and $X_T^{(r)}$ and using the triangle inequality, we derive
\begin{equation}\label{eq:weak-cf-bound}
    W_p(r) \le c_p\int_{\R}\frac{\abs{\varphi_{X_T^{\vphantom{(r)}}}(s)-\varphi_{X_T^{(r)}}(s)}}{\abs{s}^{1+p}}\,\dd s.
\end{equation}
Note that the random variables $X_T^{(r)}$ and $R_T^{(r)}$ are independent: they are defined by restricting the Poisson random measure to the disjoint deterministic sets $\{(t,z):0<t\le T,\ |z|>r(t)\}$ and $\{(t,z):0<t\le T,\ |z|\le r(t)\}$, respectively, and deterministic compensation does not affect independence. Therefore, since
$\varphi_{X_T^{\vphantom{(r)}}}(s)=\varphi_{X_T^{(r)}}(s)\varphi_{R_T^{(r)}}(s)$,
then
\[
\abs{\varphi_{X_T^{\vphantom{(r)}}}(s)-\varphi_{X_T^{(r)}}(s)}
= \abs{\varphi_{X_T^{(r)}}(s)}\,\abs{1-\varphi_{R_T^{(r)}}(s)}
\le \abs{1-\varphi_{R_T^{(r)}}(s)}.
\]
Note that the residual $R_T^{(r)}$ is symmetric because $\nu$ is symmetric. Then
\[
\varphi_{R_T^{(r)}}(s)=\E[\cos(sR_T^{(r)})]
\]
Using $1-\cos y \le \min(2,y^2/2)$, we obtain
\[
1-\varphi_{R_T^{(r)}}(s)=\E[1-\cos(sR_T^{(r)})] \le \min\left(2,\frac{s^2}{2}\E[(R_T^{(r)})^2]\right).
\]
By \eqref{eq:ito-isometry-error},
$\E[(R_T^{(r)})^2]=\EE(r)$. Substituting into \eqref{eq:weak-cf-bound} yields
\[
W_p(r) \le c_p\int_{\R} \min\left(2,\frac{s^2}{2}\EE(r)\right)\frac{\dd s}{\abs{s}^{1+p}}.
\]
Assume from now on that $\EE(r)>0$, otherwise there is nothing to prove.
Set
$u_0\coloneq 2/\sqrt{\EE(r)}$. Then
\[
\min\left(2,\frac{s^2}{2}\EE(r)\right)
=
\begin{cases}
\frac{s^2}{2}\EE(r), & \abs{s}\le u_0,\\
2, & \abs{s}>u_0.
\end{cases}
\]
Therefore,
\begin{align*}
    W_p(r)
    &\le 2c_p\left[
    \int_0^{u_0}\frac{(s^2/2)\EE(r)}{s^{1+p}}\,\dd s
    +
    \int_{u_0}^{\infty}\frac{2}{s^{1+p}}\,\dd s
    \right]\\
    &= 2c_p\left[
    \frac{\EE(r)}{2}\cdot \frac{u_0^{2-p}}{2-p}
    +
    2\cdot \frac{u_0^{-p}}{p}
    \right]
    = C_p\,\EE^{p/2}(r),
\end{align*}
where we used $u_0^{2-p}=2^{2-p}\EE^{-(2-p)/2}(r)$ and
$u_0^{-p}=2^{-p}\EE^{p/2}(r)$, and absorbed the resulting finite constant into $C_p$:
\[
C_p \coloneq c_p\,2^{2-p}\left(\frac{1}{2-p}+\frac{1}{p}\right).
\]
This proves \eqref{eq:weak-bound}.
\end{proof}

\begin{corollary}\label{cor:weak-stable}
Let $p\in(0,\alpha)$. If $\sigma<1/2$ and the hypotheses of Theorem~\ref{thm:weak-bound} hold for this $p$, then in the non-truncated admissible matched-cost regime of Theorem~\ref{thm:matched-cost} we have
\begin{equation}\label{eq:weak-ratio-bound}
    W_p\bigl(r_{c_{\varepsilon}(J),\varepsilon}\bigr)
    \le C_p\,\EE_0^{p/2}(J) \,R^{p/2}(\varepsilon).
\end{equation}
Therefore the analytic factor $R^{p/2}(\varepsilon)$ is minimized at
$\varepsilon^\ast=\alpha\sigma$; if $\varepsilon^\ast$ is admissible in the non-truncated regime for the chosen cost $J$,
then it minimizes the derived weak-error upper bound within the non-truncated admissible matched-cost
family.
\end{corollary}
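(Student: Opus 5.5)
The plan is to read the estimate off Theorem~\ref{thm:weak-bound} and then use the matched-cost formulas from Theorem~\ref{thm:matched-cost}. Fix a cost $J$ and an admissible exponent $\varepsilon\in\bigl[0,1-\tfrac{2MT}{J}\bigr]$. Set $c=c_\varepsilon(J)$, so that the cutoff $r_{c_\varepsilon(J),\varepsilon}(t)=c_\varepsilon(J)t^{\varepsilon/\alpha}$ takes values in $(0,1]$ on $(0,T]$. It is therefore a legitimate cutoff for \eqref{eq:retained-theoretical}.

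First I check that $\EE(r_{c_\varepsilon(J),\varepsilon})<\infty$, as Theorem~\ref{thm:weak-bound} requires. Since $\sigma<1/2$ and $\varepsilon\theta\ge 0$, we have $1-2\sigma+\varepsilon\theta>0$. Part~(i) of Theorem~\ref{thm:matched-cost} then gives the finite value $\EE_\varepsilon(J)$ in \eqref{eq:E-epsilon-cost}. The $L^p$ hypotheses are assumed in the corollary. The restriction $p<\alpha$ is what makes them natural, because $X_T$ has stable-type tails and so lies in $L^p$ exactly for $p<\alpha$. Applying Theorem~\ref{thm:weak-bound} gives
\[
W_p\bigl(r_{c_\varepsilon(J),\varepsilon}\bigr)\le C_p\,\EE_\varepsilon(J)^{p/2}.
\]

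Next, part~(ii) of Theorem~\ref{thm:matched-cost} applies because $\sigma<1/2$, and it gives $\EE_\varepsilon(J)=\EE_0(J)\,R(\varepsilon)$. Raising both sides to the power $p/2$ yields \eqref{eq:weak-ratio-bound}.

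For the optimality claim, note that $\EE_0(J)$ does not depend on $\varepsilon$. The map $x\mapsto x^{p/2}$ is strictly increasing on $(0,\infty)$, so $R^{p/2}$ and $R$ have the same minimizers on any set. Part~(iii) of Theorem~\ref{thm:matched-cost} states that $R$ is uniquely minimized on the admissible interval at $\varepsilon^\ast=\alpha\sigma$, provided $J\ge 2MT/(1-\alpha\sigma)$. That inequality is exactly the admissibility of $\varepsilon^\ast$ in the non-truncated regime, since it is equivalent to $\alpha\sigma\le 1-2MT/J$. Hence $\varepsilon^\ast$ minimizes the upper bound in \eqref{eq:weak-ratio-bound} over the admissible matched-cost family.

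There is no real obstacle here. The only point needing care is to confirm that every admissible $\varepsilon$ satisfies both the finiteness condition $1-2\sigma+\varepsilon\theta>0$ and the pointwise bound $r\le 1$. Both follow from $\sigma<1/2$ and from \eqref{eps1}.
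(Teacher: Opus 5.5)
Your proposal is correct and matches the paper's proof: you apply Theorem~\ref{thm:weak-bound} to the matched-cost cutoff, rewrite $\EE_\varepsilon(J)=\EE_0(J)R(\varepsilon)$ via \eqref{eq:R-epsilon}, and use strict monotonicity of $x\mapsto x^{p/2}$ to carry the minimizer $\varepsilon^\ast=\alpha\sigma$ over from $R$. Your explicit checks that $1-2\sigma+\varepsilon\theta>0$ and $r\le 1$ hold on the admissible range, and that $J\ge 2MT/(1-\alpha\sigma)$ is exactly admissibility of $\varepsilon^\ast$, supply details the paper leaves implicit.
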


\begin{proof}
Combine Theorem~\ref{thm:weak-bound} with \eqref{eq:R-epsilon}. Since $x\mapsto x^{p/2}$ is strictly increasing on $(0,\infty)$, the analytic minimizer is the same as for $R$. The final claim concerns the upper bound \eqref{eq:weak-ratio-bound}, not the exact weak error.
\end{proof}

\medskip
\noindent\textbf{Source code.}
The source code used for the calculations and to generate Figures~\ref{fig:tau-viz} and~\ref{fig:R-viz} is available at \url{https://github.com/d-platonov/CMP}.

\end{document}